\newcommand{\be}{\begin{equation}}
\newcommand{\ee}{\end{equation}}
\newcommand{\beq}{\begin{eqnarray}}
\newcommand{\eeq}{\end{eqnarray}}
\newtheorem{prop}{Proposition}[section]
\newtheorem{theo}[prop]{Theorem}
\newtheorem{lemm}[prop]{Lemma}
\newtheorem{coro}[prop]{Corollary}
\newtheorem{rema}[prop]{Remark}
\def\begeq{\begin{equation}}
\def\endeq{\end{equation}}
\begin{document}

\title{Gauss map of translating solitons of mean curvature flow}

\begin{abstract}
In this short note we study Bernstein's type theorem of translating solitons whose images of their Gauss maps are contained in compact subsets in an open hemisphere of the standard $\mathbf{S}^n$ (see Theorem \ref{mainresult}). As a special case we get a classical Bernstein's type theorem in minimal submanifolds in $\mathbf{R}^{n+1}$ (see Corollary \ref{Bernsteintheorem}).
\end{abstract}

\keywords{translating soliton, Gauss map}
\renewcommand{\subjclassname}{\textup{2000} Mathematics Subject Classification}
 \subjclass[2000]{Primary 53C25; Secondary 58J05}

\author{Chao Bao$^\dag$,
  Yuguang Shi$^\dag$}

\address{Chao Bao, Key Laboratory of Pure and Applied mathematics, School of Mathematics Science, Peking University,
Beijing, 100871, P.R. China.} \email{chbao@126.com}

\address{Yuguang Shi, Key Laboratory of Pure and Applied mathematics, School of Mathematics Science, Peking University,
Beijing, 100871, P.R. China.} \email{ygshi@math.pku.edu.cn}

\thanks{$^\dag$ Research partially supported by   NSF grant of China  10990013.}

\date{2013}
\maketitle

\markboth{Chao Bao,   Yuguang Shi}{}

 \maketitle

 \section{Introduction}

 Let $F_0 : \mathbf{\Sigma} \rightarrow \mathbf{R}^{n+1}$ be a smooth immersion of an
 n-dimensional hypersurface in $\mathbf{R}^{n+1}$, $n \geq 2$. The mean curvature flow is a one-parameter family of smooth
 immersions $F : \mathbf{\Sigma} \times [0,T) \rightarrow \mathbf{R}^{n+1}$ satisfying:
 \begin{equation} \label{MCF}
 \left\{
 \begin{array}{llll}
 \frac{\partial F}{\partial t}(p,t) = -H(p,t)\overrightarrow{\nu}  , p \in M, t \geq 0
 \\
 F(\cdot,0) = F_0
 \end{array}\right.
 \end{equation}
 where $-H(p,t) \overrightarrow{\nu}$ is the mean curvature vector, $\overrightarrow{\nu}(p,t)$ is the outer
 normal vector and $H(p,t)$ is the mean curvature respect to the normal vector $\overrightarrow{\nu}(p,t)$. It is easy
 to see that  the mean curvature of a convex surface is positive in our definition.

 If the initial hypersurface is compact, it is not hard to see that mean curvature flow must develop singularities in
 finite time. By the  blow up rate of second fundamental form, we can divided the singularities into
 two types, i.e. we say it type-1 singularity if there is a constant $C$ such that $\max\limits_{M_t} |A|^2 \leq
 \frac{C}{T-t}$ as $t \rightarrow T$, otherwise we say it of type-2, here $A$ is the second fundamental forms of  hypersurface at time $t$ . It is well-known that if a mean curvature
 flow develops type-1 singularities, we can get self-shrinking solutions after rescaling the surface near a singularity. Similarly, if the initial surface is mean convex and the singularity is of type-2, Huisken and Sinistrari \cite{HS} have proved that  any limiting flow is a  convex hypersurface which is a convex translating soliton. And it is not too difficult to see that a translating solitons is a hypersurface in $\mathbf{R}^{n+1}$ satisfying certain nonlinear equations (see (\ref{Sol})below, and we only consider codimension $1$ case in this paper.), it plays  an important role in analysis of singularities in mean curvature flow (see \cite{Ham, HS}, for examples). On the other hand, translating solitons can also be regarded as  natural generalizations of minimal hypersurface in $ \mathbf{R}^{n+1}$. With these facts in mind, it is natural to ask if Bernstin's type result is still true for translating solitons.

 Inspired by work of \cite{Ch, LW, X, XDY, W} etc, we focus on  investigation of uniqueness translating solitons solutions through their Gauss maps. We are able to get a Bernstein type property of translating solitons. Namely,  if  the Gauss map image of a translating soliton lies in a compact subset of a open hemisphere of $\mathbf{S}^n$, then it must be a hyperplane.  For self shrinkers similar results have been obtained by Xin, Ding,Yang \cite{XDY}(Theorem 3.2).

 Before we state our main results, we need to express some basic facts and notations first.

 A \emph{translating soliton} is a solution to (\ref{MCF}) translating in the direction of a constant vector $T$ in $\mathbf{R}^{n+1}$, more precisely, we call $F: \mathbf{\Sigma}^n \rightarrow\mathbf{R}^{n+1}$ is a translating soliton if $<T , \overrightarrow{\nu}(p)> = -H(p)$, here $\overrightarrow{\nu}(p)$ and $H(p)$ is defined as above. For simplicity, we will identify $F(\mathbf{\Sigma})$ and $\mathbf{\Sigma}$ in the sequel, and simply say $\mathbf{\Sigma}$ is a translating soliton.

 Let $\mathbf{\Sigma }$ be a translating soliton . We always take the outer normal vector through out this
 paper, denote the induced metric by $g = \{g_{ij}\}$, the surface measure by $d\mu$, the second fundamental
 form by $A = \{h_{ij}\}$, and mean curvature by $H = g^{ij}h_{ij}$. We then denote by $\lambda_1 \leq \cdots
 \leq \lambda_n$ the principal curvature, i.e. the eigenvalue of the matrix $(h^i_j) = (g^{ik}h_{kj})$. It is
 obviously that $H = \lambda_1 + \cdots + \lambda_n$. In addition, $|A|^2 = \lambda_1^2 + \cdots + \lambda_n^2$
 will denote the squared norm of the second fundamental form.

Let $u : \mathbf{\Sigma} \rightarrow \mathbf{S}^n$ be the Gauss map of the translating soliton $\mathbf{\Sigma}$, and the image $u(x)$ be the outer normal vector of $\mathbf{\Sigma}$.

\begin{theo}\label{mainresult}
Let $\mathbf{\Sigma }\subset\mathbf{R}^{n+1}$ be a n-dimensional complete translating soliton with bounded mean curvature. If the image of Gauss map $u$ of $\mathbf{\Sigma}$ lies in a ball $B_\Lambda^{S^n}(y_0)$ of $\mathbf{S}^n$, where $\Lambda < \frac{\pi}{2}$, then $\mathbf{\Sigma}$ must be a hyperplane.
\end{theo}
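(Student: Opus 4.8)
The plan is to exploit the fact that the Gauss map of a translating soliton is harmonic with respect to a suitable drift Laplacian, and then to manufacture a bounded subharmonic function out of the hypothesis that the image of $u$ stays a definite distance away from the equator.

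First I would establish the translator analogue of the Ruh--Vilms theorem. Writing $\mathcal{L} = \Delta + \langle T, \nabla\,\cdot\,\rangle$ for the drift Laplacian associated with the weighted measure $e^{\langle T, x\rangle}\,d\mu$, the Weingarten and Codazzi equations give the standard identity $\Delta \nu = \nabla H - |A|^2\nu$; substituting the soliton relation $H = -\langle T,\nu\rangle$ (so that the tangential term $\nabla H$ exactly cancels the drift contribution $\nabla_{T^\top}\nu$) yields $\mathcal{L}\nu = -|A|^2\nu$. In particular, for any fixed $a \in \mathbf{R}^{n+1}$ the support-type function $\langle \nu, a\rangle$ satisfies $\mathcal{L}\langle\nu,a\rangle = -|A|^2\langle\nu,a\rangle$, so $u$ is a drift-harmonic map into $\mathbf{S}^n$.

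Next I would bring in the center $y_0$ of the ball. Set $v = \langle u, y_0\rangle = \langle \nu, y_0\rangle$. Since the image of $u$ lies in $B_\Lambda^{S^n}(y_0)$ with $\Lambda < \frac{\pi}{2}$, we have $v \geq \cos\Lambda > 0$, so $v$ is a positive, bounded solution of $\mathcal{L}v = -|A|^2 v$. Applying the chain rule $\mathcal{L}\bigl(\phi(v)\bigr) = \phi'(v)\,\mathcal{L}v + \phi''(v)\,|\nabla v|^2$ to $w = v^{-1}$ gives $\mathcal{L}w = v^{-1}|A|^2 + 2v^{-3}|\nabla v|^2 \geq 0$, while $1 \leq w \leq \sec\Lambda$. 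Thus $w$ is a bounded $\mathcal{L}$-subharmonic function on the complete soliton.

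Finally, a Liouville-type theorem for the drift Laplacian would force $w$ to be constant; then $\mathcal{L}w \equiv 0$, and since $\mathcal{L}w \geq v^{-1}|A|^2 \geq 0$ this forces $|A| \equiv 0$, so $\Sigma$ is totally geodesic and hence, being complete, a hyperplane. The main obstacle is exactly this last step: $\Sigma$ is noncompact, so one cannot invoke the maximum principle at an interior maximum, and one must instead show that the weighted manifold $(\Sigma, g, e^{\langle T, x\rangle}d\mu)$ is parabolic, i.e.\ that its weighted volume growth forces bounded subharmonic functions to be constant. This is where completeness and the bounded-mean-curvature hypothesis should enter, e.g.\ through a Caccioppoli cutoff estimate of the form $\int v^2 e^{\langle T,x\rangle}|\nabla q_\alpha|^2\varphi^2 \leq C\int v^2 e^{\langle T,x\rangle}|\nabla\varphi|^2$ for the gnomonic coordinates $q_\alpha = \langle u, e_\alpha\rangle/v$, where $\{e_\alpha\}$ spans $y_0^\perp$; these $q_\alpha$ are bounded by $\tan\Lambda$ and weighted-harmonic, since the ratio of two solutions of $\mathcal{L}\psi = -|A|^2\psi$ satisfies $\mathrm{div}\bigl(v^2 e^{\langle T,x\rangle}\nabla q_\alpha\bigr)=0$, and showing each $q_\alpha$ constant identifies $u$ as a constant map directly. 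The specialization $T=0$ reduces $\mathcal{L}$ to $\Delta$ and recovers the minimal case of Corollary \ref{Bernsteintheorem}.
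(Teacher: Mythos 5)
Your first two steps are sound: the drift-harmonicity $\mathcal{L}\nu = -|A|^2\nu$ is a correct repackaging of the paper's quasi-harmonic map equation $\tau(u) = -du(V)$ (Lemma 2.1), and $w = \langle \nu, y_0\rangle^{-1}$ is indeed a bounded $\mathcal{L}$-subharmonic function. The gap is the final Liouville step, and it is not technical but fatal: the weighted manifold $(\Sigma, g, e^{\langle T,x\rangle}d\mu)$ of a translator is \emph{not} parabolic, so the Liouville theorem you invoke is false at this level of generality. Take the simplest translator satisfying every hypothesis of the theorem: a hyperplane containing the direction $T$, say $\Sigma = \mathbf{R}^n$ with $T = e_1$ tangent to it, so that $\mathcal{L} = \Delta + \partial_{x_1}$. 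The function $f = \max(-e^{-x_1}, -1)$ is bounded, non-constant, and $\mathcal{L}$-subharmonic, being the maximum of two $\mathcal{L}$-harmonic functions (note $\mathcal{L}e^{-x_1} = e^{-x_1} - e^{-x_1} = 0$); equivalently, the diffusion generated by $\mathcal{L}$ is Brownian motion with constant drift, which is transient. So the very object the theorem is supposed to single out already violates the parabolicity you propose to prove. The Caccioppoli route fares no better: to conclude $\nabla q_\alpha \equiv 0$ from $\int \sigma |\nabla q_\alpha|^2 \varphi^2 \leq C \int \sigma |\nabla \varphi|^2$ with $\sigma = v^2 e^{\langle T,x\rangle}$, you need cutoffs making the right-hand side tend to zero, roughly $\sigma(B_{2R}\setminus B_R) = o(R^2)$; but the weight $e^{\langle T,x\rangle}$ makes the weighted volume grow exponentially in $R$ whenever $\Sigma$ extends in the direction of $T$. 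This is exactly where the analogy with self-shrinkers (Xin--Ding--Yang), from which this strategy is transplanted, breaks down: there the weight $e^{-|x|^2/4}$ decays, so polynomial volume growth yields finite weighted volume and hence parabolicity, whereas for translators the weight grows and no hypothesis of the theorem can repair that.

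The paper closes the argument without ever integrating; everything is pointwise. It derives the Bochner formula $\Delta|\nabla u|^2 = 2|\nabla du|^2 - 2|\nabla u|^4 - \langle V, \nabla|\nabla u|^2\rangle$, composes the Gauss map with the strictly convex function $\varphi = 1-\cos\rho$ on $B_{\Lambda}^{S^n}(y_0)$, and applies a Cheng--Yau type maximum principle to $F = (R^2-r^2)^2 |\nabla u|^2/(b-\varphi\circ u)^2$ on $\Sigma \cap \mathbf{B}^{n+1}_R(0)$. At the interior maximum of $F$ the drift term $\langle V, \nabla \phi\rangle$ is harmless because $|V| \leq |T|$ is bounded, and the bounded mean curvature enters only through $\Delta r^2 \leq C(1+r)$; the outcome is the gradient estimate $\sup_{B_{R/2}^{n+1}(0)\cap\Sigma} |\nabla u|/(b-\varphi) \leq C(R^{-1/2}+R^{-1})$, which forces $|\nabla u| \equiv 0$ as $R \to \infty$. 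To salvage your setup you would have to replace the parabolicity claim by a localized pointwise estimate of this kind applied to $w$ (or to $|\nabla u|^2/(b-\varphi)^2$); the mere subharmonicity and boundedness of $w$ carry too little information on a translator.
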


As it was mentioned before that a complete minimal hypersurface is a stationary solution of mean curvature flow, surely it is also a translating soliton, then we can also get a Bernstein type result for minimal hypersurfaces as following one. We wonder it is a well-known result, however, we cannot find the exact reference for it.

\begin{coro}\label{Bernsteintheorem}
  Let $\mathbf{\Sigma }\subset\mathbf{R}^{n+1}$ be a complete minimal hypersurface and image of Gauss map $u$ of $\mathbf{\Sigma}$ lies in a ball $B_\Lambda^{S^n}(y_0)$ of $\mathbf{S}^n$, where $\Lambda < \frac{\pi}{2}$, then $\mathbf{\Sigma}$ must be a hyperplane.
\end{coro}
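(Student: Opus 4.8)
The plan is to obtain the corollary as an immediate specialization of Theorem \ref{mainresult}; the only task is to check that a complete minimal hypersurface satisfies the hypotheses of that theorem verbatim. First I would record that a minimal hypersurface has $H \equiv 0$. In particular its mean curvature is (trivially) bounded, so the boundedness assumption in Theorem \ref{mainresult} is automatically met.

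Next I would verify that $\mathbf{\Sigma}$ is a translating soliton in the sense of the definition given above. Choosing the translation vector $T = 0$, the soliton equation $\langle T, \overrightarrow{\nu}(p)\rangle = -H(p)$ reduces to $0 = 0$, which holds pointwise precisely because $H \equiv 0$. Hence every minimal hypersurface is a translating soliton, in accordance with the remark that it is a stationary solution of mean curvature flow. Since completeness and the Gauss map condition $u(\mathbf{\Sigma}) \subset B_\Lambda^{S^n}(y_0)$ with $\Lambda < \frac{\pi}{2}$ are assumed identically in the corollary, all the hypotheses of Theorem \ref{mainresult} are now in place, and applying that theorem forces $\mathbf{\Sigma}$ to be a hyperplane.

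I do not expect any genuine obstacle here: all of the analytic content resides in Theorem \ref{mainresult}, and the corollary follows by setting $T = 0$ and observing that the bounded-mean-curvature hypothesis degenerates to the trivially satisfied condition $H \equiv 0$. The only point worth stating explicitly is that $T = 0$ is a legitimate translation direction, so that the minimal case is genuinely captured by the translating-soliton framework rather than falling outside it.
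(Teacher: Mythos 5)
Your proposal is correct and matches the paper's own reasoning exactly: the paper derives the corollary by observing that a minimal hypersurface ($H \equiv 0$, hence trivially bounded mean curvature) is a translating soliton with $T = 0$, and then invoking Theorem \ref{mainresult}. Nothing more is needed.
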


\begin{rema}
By assuming $\mathbf{\Sigma}$ has Euclidean volume growth and the image under the Gauss map omits a neighbourhood of $\bar{\mathbf{S}}_{+}^{n-1}$, Jost,Xin,Yang could prove a similar result (see  Theorem 6.6 in \cite{JXY}). We suspect that our assumption on the image of Gauss map implies Euclidean volume growth of $\mathbf{\Sigma}$.
\end{rema}

In the remain part of the paper we will first derive some useful formulae for translating solitons and then give a proof of Theorem \ref{mainresult}.

\section{Gauss map of translating soliton}
Let $V = {V^\alpha}$ be the tangential part of $T$. Then the normal component must be $-HN^\alpha$
to solve the mean curvature flow. In local coordinates

 \begin{equation*}
   V^{\alpha} = V^i \nabla_{i}F^{\alpha}
 \end{equation*}

 where $\{V^i\}$ is a tangent vector on M, and the unit outer normal $\overrightarrow{N} = \{N^{\alpha}\}$.

 Take covariant derivative on the above equality for $i = 1, \cdots , n$.

 \begin{align*}
   0& = \nabla_i T^{\alpha} = \nabla_i(V^j \frac{\partial F^{\alpha}}{\partial x_j} - H N^{\alpha})
   \\
   & = (\nabla_i V^j) \frac{\partial F^{\alpha}}{\partial x_j} + V^j ( - h_{ij}N^{\alpha}) - (\nabla_i H) N^{\alpha} - H h_{ij}g^{jk} \frac{\partial F^{\alpha}}{\partial x_k}
 \end{align*}

 Then equating tangential and normal components we find that
\begin{equation}
 \left\{
 \begin{array}{llll}
   \nabla_i V^j = H h_{ik}g^{kj}
   \\
   \nabla_i H + h_{ij}V^j = 0
  \end{array}\right.                                \label{Sol}
 \end{equation}

We use the previous definition of Gauss map $u: \Sigma \rightarrow S^n$. The pull back under $u$ of the tangent bundle $TS^n$ to a bundle over $u$ is denoted by $u^{-1} T S^n$. $T \Sigma $ and $N\Sigma$ denote the tangent bundle and the normal bundle of $\Sigma$ respectively. It is easy to see that $u^{-1} TS^n$ is isometric to the tensor product $T\Sigma \times N\Sigma$.

By definition, the mean curvature $H = Tr u_{\ast}$ is the trace of $u_{\ast}$ respect to the Riemannian metric on $T\Sigma$. We denote $du = u_{\ast}$, and obviously $du$ is a cross section on $u^{-1}TS^n$.

So we can get that $\nabla H = \nabla Tr du = Tr \nabla du$. Now, let us compute exactly what $Tr \nabla du$ is. Denote $\Gamma$ and $\tilde{\Gamma}$ the Christoffel symbols on $\Sigma$ and $S^n$, $\{x^i\}$ and $\{y^{\alpha}\}$ the local coordinates on $\Sigma$ and $S^n$ respectively.

\begin{align*}
  \nabla du & = \nabla_j(du) dx^j = \nabla_j(\frac{\partial u^{\alpha}}{\partial x^i} dx^i \otimes \frac{\partial}{\partial y^{\alpha}}) dx^j\\
  & =  (\frac{\partial^2 u^{\alpha}}{\partial x^i \partial x^j}dx^i \otimes \frac{\partial}{\partial y^{\alpha}} - \Gamma^i_{lj} \frac{\partial u^{\alpha}}{\partial x^i}dx^l \otimes \frac{\partial}{\partial y^{\alpha}} + \tilde{\Gamma}^{\sigma}_{\alpha \beta} \frac{\partial u^{\alpha}}{\partial x^i}\frac{\partial u^{\beta}}{\partial x^j}dx^i \otimes \frac{\partial}{\partial y^{\sigma}}) dx^j
\end{align*}

So,
\begin{equation}
  Tr \nabla du = \Delta u^{\alpha} \frac{\partial}{\partial y^{\alpha}} + g^{ij} \tilde{\Gamma}^{\alpha}_{\beta \sigma} \frac{\partial u^{\beta}}{\partial x^i} \frac{\partial u^{\sigma}}{\partial x^j} \frac{\partial}{\partial y^{\alpha}}
\end{equation}

We denote
\begin{equation*}
\tau^{\alpha}(u) = \Delta u^{\alpha}  + g^{ij} \tilde{\Gamma}^{\alpha}_{\beta \sigma} \frac{\partial u^{\beta}}{\partial x^i} \frac{\partial u^{\sigma}}{\partial x^j}.
\end{equation*}

By the previous calculations, we get $\tau(u) = \nabla H$

Choosing an orthonormal frame $\{e_i\}_{i=1}^n$ on $\Sigma$. From (\ref{Sol}) and Weingarten formula:
\begin{equation*}
  \nabla_i u = h_{ij}e_j.
\end{equation*}
we get $\nabla H = - du(V)$. Then we have the following lemma:

\begin{lemm}
  The Gauss map $u$ of a translating soliton $\Sigma$ forms a quasi-harmonic map,i.e.
  \begin{equation}
    \tau(u) = - du(V)                                        \label{Har}
  \end{equation}
  where $\tau(u)$ is defined as above, $V$ is a tangent vector field on $\Sigma$.
\end{lemm}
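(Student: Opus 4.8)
The plan is to assemble the identity (\ref{Har}) from the three ingredients already prepared above, so that very little genuinely new work is required. First I record a remark that makes the statement meaningful: since $\Sigma$ has codimension one, the normal bundle $N\Sigma$ is a real line bundle and the isometry $u^{-1}TS^n \cong T\Sigma \otimes N\Sigma$ collapses to an identification of $u^{-1}TS^n$ with $T\Sigma$ itself. Indeed, at a point $x$ the tangent space $T_{u(x)}\mathbf{S}^n$ is the orthogonal complement of the normal $u(x)=\overrightarrow{N}$ in $\mathbf{R}^{n+1}$, which is exactly $T_x\Sigma$. Under this identification the section $\tau(u)$ of $u^{-1}TS^n$ is read as a tangent vector field on $\Sigma$, the same category as $\nabla H$, so that both sides of (\ref{Har}) live in one and the same bundle.

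Granting this, the first step is the relation $\tau(u)=\nabla H$, already obtained above from $H=\operatorname{Tr}du$ together with the commutation $\nabla\operatorname{Tr}du=\operatorname{Tr}\nabla du$ and the coordinate expression for $\operatorname{Tr}\nabla du$. Here one must check that the Levi--Civita connection of $\mathbf{S}^n$ pulls back, under the codimension-one identification, to the induced connection on $T\Sigma$, and this is the point that deserves the most care.

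The second and third steps simply feed in the soliton structure. From the second equation of (\ref{Sol}) I rewrite the gradient of the mean curvature as $\nabla_iH=-h_{ij}V^j$. Working in the orthonormal frame $\{e_i\}$ and using the Weingarten formula $\nabla_iu=h_{ij}e_j$, I expand, writing $V=V^ie_i$,
\begin{equation*}
du(V)=V^i\nabla_iu=V^ih_{ij}e_j,
\end{equation*}
while $\nabla H=(\nabla_jH)e_j=-h_{jk}V^ke_j$. Symmetry of $h_{ij}$ then gives $\nabla H=-du(V)$, and combining with the first step yields $\tau(u)=\nabla H=-du(V)$, which is exactly (\ref{Har}).

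I expect the only genuine obstacle to be the bookkeeping in the first step: making the bundle identification precise and confirming that the tension field, computed intrinsically as a section of the pulled-back tangent bundle of the sphere, coincides with the ambient gradient $\nabla H$ of the mean curvature under the codimension-one identification. Once that correspondence is pinned down, the remainder is a direct substitution of the soliton equation and the Weingarten relation, with no analytic difficulty.
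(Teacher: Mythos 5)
Your proposal is correct and follows essentially the same route as the paper: establish $\tau(u)=\nabla H$ from $H=\operatorname{Tr}du$ and the coordinate computation of $\operatorname{Tr}\nabla du$, then combine the second soliton equation $\nabla_iH=-h_{ij}V^j$ with the Weingarten formula $\nabla_iu=h_{ij}e_j$ and the symmetry of $h_{ij}$ to conclude $\nabla H=-du(V)$. Your added care about the codimension-one identification $T_{u(x)}\mathbf{S}^n=u(x)^\perp=T_x\Sigma$ is a welcome clarification of a point the paper treats only implicitly, but it is not a different argument.
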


\begin{lemm}
 For the quasi-harmonic equation of Gauss map $u$, we have the following Bochner formula:
 \begin{equation}
   \Delta|\nabla u|^2 = 2|\nabla du|^2 - 2|\nabla u|^4 - <V, \nabla|\nabla u|^2>.
 \end{equation}
\end{lemm}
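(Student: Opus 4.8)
The plan is to specialise the general Bochner--Weitzenb\"ock formula for the energy density of a smooth map to our Gauss map $u:\Sigma\to\mathbf{S}^n$, and then to feed in the three structural relations available here: the Weingarten formula $\nabla_i u = h_{ij}e_j$, the Gauss and Codazzi equations for a hypersurface in flat $\mathbf{R}^{n+1}$, and the soliton system \eqref{Sol}. Concretely, for any map into a Riemannian manifold one has, in a local orthonormal frame $\{e_i\}$,
\[
\tfrac12\Delta|\nabla u|^2 = |\nabla du|^2 + \langle du,\nabla\tau(u)\rangle + \sum_i\langle du(\operatorname{Ric}^{\Sigma}e_i),du(e_i)\rangle - \sum_{i,j}\langle R^{\mathbf{S}^n}(du(e_i),du(e_j))du(e_j),du(e_i)\rangle .
\]
The whole argument then reduces to evaluating the last three terms in our situation and verifying that the cubic-in-$A$ contributions cancel.

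First I would treat the two curvature terms. Since $\mathbf{S}^n$ has constant sectional curvature one, $R^{\mathbf{S}^n}(X,Y)Z=\langle Y,Z\rangle X-\langle X,Z\rangle Y$; combined with $du(e_i)=h_{ij}e_j$ this turns the target-curvature term into $(\operatorname{tr}A^2)^2-\operatorname{tr}(A^4)$. For the domain, the Gauss equation in a flat ambient space gives $\operatorname{Ric}^{\Sigma}_{ij}=Hh_{ij}-(A^2)_{ij}$, so the Ricci term equals $H\operatorname{tr}(A^3)-\operatorname{tr}(A^4)$. The key observation is that the two quartic pieces $\operatorname{tr}(A^4)$ cancel, leaving $H\operatorname{tr}(A^3)-(\operatorname{tr}A^2)^2$. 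Because $|\nabla u|^2=|A|^2=\operatorname{tr}A^2$ under the identification of $u^{-1}T\mathbf{S}^n$ with $T\Sigma$, this equals $H\operatorname{tr}(A^3)-|\nabla u|^4$.

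Next I would compute $\langle du,\nabla\tau(u)\rangle$ using the previous lemma $\tau(u)=-du(V)$. Expanding $\nabla_{e_i}(du(V))$ by the product rule (viewing $du$ as a section of $T^*\Sigma\otimes u^{-1}T\mathbf{S}^n$) produces one piece involving $\nabla_iV^j$ and one involving $\nabla du$. Here the soliton equations \eqref{Sol} enter twice: $\nabla_iV^j=Hh_{ij}$ converts the first piece into $H\operatorname{tr}(A^3)$, while the Codazzi symmetry of $\nabla_i h_{jk}$ turns the second piece into $\tfrac12\langle V,\nabla|\nabla u|^2\rangle$. Thus $\langle du,\nabla\tau(u)\rangle=-H\operatorname{tr}(A^3)-\tfrac12\langle V,\nabla|\nabla u|^2\rangle$.

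Adding the contributions, the cubic terms $\pm H\operatorname{tr}(A^3)$ cancel and we obtain $\tfrac12\Delta|\nabla u|^2=|\nabla du|^2-|\nabla u|^4-\tfrac12\langle V,\nabla|\nabla u|^2\rangle$, which upon doubling is the asserted identity. I expect the main obstacle to be the bookkeeping in the $\tau$-term computation: one must handle the connection on $u^{-1}T\mathbf{S}^n$ carefully, together with the identification that lets us write $du(e_i)=h_{ij}e_j$, since it is precisely this identification that makes both the Gauss-equation cancellation and the emergence of $\langle V,\nabla|\nabla u|^2\rangle$ work. As an independent check, the same formula follows intrinsically from Simons' identity $\Delta h_{ij}=\nabla_i\nabla_j H+H(A^2)_{ij}-|A|^2h_{ij}$: contracting with $h_{ij}$, substituting $\nabla_j H=-h_{jk}V^k$ from \eqref{Sol} and differentiating once more, the Simons cubic term and the soliton cubic term again cancel, reproducing $\Delta|A|^2=2|\nabla A|^2-2|A|^4-\langle V,\nabla|A|^2\rangle$.
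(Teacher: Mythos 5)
Correct, and essentially the paper's own argument: you specialise the same general Bochner formula for maps (Lemma 3.1 of \cite{L}, which the paper also invokes) via the Weingarten relation $u_i=h_{ij}e_j$, the Gauss equation, and the soliton system \eqref{Sol}, and the two cancellations you single out --- of $\tr(A^4)$ between the domain-Ricci and target-curvature terms, and of $H\tr(A^3)$ between the curvature terms and the $\tau$-term --- are exactly the ones driving the paper's proof. If anything your sign bookkeeping is the more careful one: the paper writes $\langle d\tau(u),du\rangle=\langle du,d(du(V))\rangle$, silently dropping the minus sign from $\tau(u)=-du(V)$, whereas your $\langle du,\nabla\tau(u)\rangle=-H\tr(A^3)-\frac{1}{2}\langle V,\nabla|\nabla u|^2\rangle$ is what actually makes the cubic terms cancel and yields the stated identity.
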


\begin{proof}
  Denote $R_{ijkl}$ and $K_{\alpha \beta \gamma \sigma}$ be the curvature operator on $\Sigma$ and $S^n$ both with induced metric respectively. In general, we have the Bochner formula (See Lemma 3.1 in \cite{L}):
  \begin{equation}
    \Delta|\nabla u|^2 = 2|\nabla du|^2 + 2<d\tau(u) , du> + 2R_{ij}<u_i,u_j> - 2K_{\alpha \beta \gamma \sigma}u^{\alpha}_i u^{\beta}_j u^{\gamma}_i u^{\sigma}_j,
  \end{equation}

  By Gauss equation for hypersurface, we have
  \begin{equation*}
    R_{ij} = Hh_{ij} - h_{ik}h_{kj}
  \end{equation*}
  \begin{equation*}
    K_{\alpha \beta \gamma \sigma} = \delta_{\alpha \gamma} \delta_{\beta \sigma} - \delta_{\alpha \sigma} \delta_{\beta \gamma}
  \end{equation*}

  Because $u$ is Gauss map and is outer normal vector, Weingarten formula gives $u_i = h_{ij}e_j $. Together with $(\ref{Sol})$, We compute

\begin{align*}
  <d \tau(u) ,du> & = <du,d(du(V))> = <du,du(\nabla V)> + <du , \nabla_{V}du>  \\
   & = HA(\nabla u, \nabla u) + \frac{1}{2}<V, \nabla|\nabla u|^2>   \\
\end{align*}
Using Gauss equation, we have
\begin{align*}
  R_{ij}<u_i, u_j>
  & = HA(\nabla u, \nabla u) - h_{ik}h_{kj}h_{ip}h_{jp}      \\
  & = HA(\nabla u, \nabla u) - \sum\limits_{i,j}(\sum\limits_{k} h_{ik}h_{kj})^2.
\end{align*}
and
\begin{align*}
  K_{\alpha \beta \gamma \sigma}u^{\alpha}_i u^{\beta}_j u^{\gamma}_i u^{\sigma}_j
  & = \delta_{\alpha \gamma} \delta_{\beta \sigma} u^{\alpha}_i u^{\beta}_j u^{\gamma}_i u^{\sigma}_j - \delta_{\alpha \sigma} \delta_{\beta \gamma}u^{\alpha}_i u^{\beta}_j u^{\gamma}_i u^{\sigma}_j      \\
  & = |\nabla u|^4 - \sum\limits_{i,j}(<u_i , u_j>)^2 \\
  & = |\nabla u|^4 - \sum\limits_{i,j}(\sum\limits_{k} h_{ik}h_{kj})^2.
\end{align*}

The assertion follows from the above equalities.
\end{proof}

\begin{lemm}
  On any ball $B_{\Lambda}^{S^n}(y_0)$ of $S^n$, $\Lambda < \frac{\pi}{2}$, let $\rho$ be the distance function from $y_0$ on $S^n$, we define  $\varphi(y) = 1 - \cos \rho(y)$ on $B_{\Lambda}^{S^n}(y_0)$, then $\varphi$ satisfies the following properties:

  (1). There exists a constant $b$, such that $0 \leq \varphi < b < 1$;

  (2). $\frac{d\varphi}{d\rho} = \sin \rho$;

  (3). $Hess \varphi = (\cos \rho) I$, where $Hess \varphi$ is the hessian of $\varphi$, and $I$ is the identity matrix.
\end{lemm}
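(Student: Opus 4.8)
The plan is to dispose of parts (1) and (2) immediately and to concentrate on the Hessian computation in part (3), which is the only substantive point.

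For part (1), observe that on $B_\Lambda^{S^n}(y_0)$ one has $0 \le \rho < \Lambda < \frac{\pi}{2}$, so $\cos\rho > \cos\Lambda > 0$ throughout the ball. Hence $0 \le \varphi = 1 - \cos\rho < 1 - \cos\Lambda$, and setting $b = 1 - \cos\Lambda$ gives a constant with $0 \le \varphi < b < 1$, as required. Part (2) is just the chain rule applied to $\varphi$ as a function of the single variable $\rho$: $\frac{d\varphi}{d\rho} = \frac{d}{d\rho}(1 - \cos\rho) = \sin\rho$.

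For part (3) I would use the standard decomposition of the Hessian of a radial function. Working in geodesic polar coordinates centred at $y_0$, the round metric on $S^n$ takes the form $g = d\rho^2 + \sin^2\rho\, g_{S^{n-1}}$, from which the Hessian of the distance function is $\mathrm{Hess}\,\rho = \cot\rho\,(g - d\rho\otimes d\rho)$, where $g - d\rho\otimes d\rho$ denotes the part of the metric tangent to the geodesic spheres. Since $\varphi$ depends only on $\rho$, the chain rule for second covariant derivatives gives
\[
  \mathrm{Hess}\,\varphi = \varphi''(\rho)\, d\rho\otimes d\rho + \varphi'(\rho)\,\mathrm{Hess}\,\rho.
\]
With $\varphi'(\rho) = \sin\rho$ and $\varphi''(\rho) = \cos\rho$ from part (2), and using the identity $\sin\rho\,\cot\rho = \cos\rho$, the two contributions combine:
\[
  \mathrm{Hess}\,\varphi = \cos\rho\, d\rho\otimes d\rho + \cos\rho\,(g - d\rho\otimes d\rho) = \cos\rho\, g.
\]
Expressed in an orthonormal frame on $S^n$ this is exactly $(\cos\rho)\,I$.

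The computation is routine; the only point requiring a little care is that the individual terms $\mathrm{Hess}\,\rho$ and $d\rho\otimes d\rho$ are singular at the centre $y_0$, whereas $\varphi = 1 - \cos\rho = \tfrac{1}{2}\rho^2 + O(\rho^4)$ extends smoothly across $y_0$. I would therefore derive the formula on the punctured ball and then extend $\mathrm{Hess}\,\varphi = (\cos\rho)I$ to $y_0$ by continuity (note that $\cos 0 = 1$ is consistent with $\mathrm{Hess}\,\varphi(y_0) = I$), or alternatively verify the formula at $y_0$ directly from the Taylor expansion. The genuine content of the lemma is not the difficulty of the calculation but the fortuitous cancellation $\sin\rho\,\cot\rho = \cos\rho$, which makes $\mathrm{Hess}\,\varphi$ a pure multiple of the metric; this is precisely the feature that will allow $\varphi\circ u$ to serve as a test function in the maximum-principle argument for Theorem \ref{mainresult}.
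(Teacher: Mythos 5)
Your proposal is correct and follows essentially the same route as the paper: both prove (3) by writing $\mathrm{Hess}\,\varphi = \varphi''\, d\rho\otimes d\rho + \varphi'\,\mathrm{Hess}\,\rho$ and substituting $\mathrm{Hess}\,\rho = \frac{\cos\rho}{\sin\rho}\,(g - d\rho\otimes d\rho)$ (which the paper cites from Greene--Wu, while you derive it from the polar form of the round metric), yielding $(\cos\rho)\,g$. Your explicit choice $b = 1-\cos\Lambda$ for (1) and your remark on smoothness across the centre $y_0$ are minor refinements of steps the paper treats as obvious.
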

\begin{proof}
  (1) and (2) hold obviously, so we only need to prove (3). Applying Proposition 2.20 of \cite{GW} (or see \cite{Ch}),
  \begin{equation*}
    D^2 \rho = \frac{\cos \rho}{\sin \rho}(dS^2 - d\rho \otimes d\rho)
  \end{equation*}
  where $dS^2$ is the metric tensor on $S^n$. It is easy to check that $D^2 \varphi = \varphi' D^2 \rho + \varphi'' d\rho \otimes d\rho$, thus we get $D^2 \varphi = (\cos \rho) dS^2$, i.e. $\varphi_{ij} = (\cos \rho) \delta_{ij}$.
\end{proof}

\emph{Proof of Theorem \ref{mainresult}}.   Choosing a convex function on $B_{\Lambda}^{S^n}(y_0)$ as above, by direct computation we have
\begin{equation}
  \Delta \varphi (u(x)) = Hess \varphi (\nabla u, \nabla u) +<D\varphi,  \tau(u)> = \cos\rho |\nabla u|^2 + <V, \nabla \varphi>
\end{equation}

 Define $\phi(x) = \frac{|\nabla u|^2(x)}{(b - \varphi(u(x)))^2}$. Then

\begin{equation*}
  \nabla \phi (x) = \frac{\nabla |\nabla u|^2}{(b-\varphi)^2} + \frac{2|\nabla u|^2 \nabla \varphi}{(b - \varphi)^3}
\end{equation*}

\begin{align}
  \Delta \phi (x) & = \frac{\Delta |\nabla u|^2}{(b - \varphi)^2} + \frac{4<\nabla \varphi , \nabla |\nabla u|^2>}{(b - \varphi)^3} + \frac{2\Delta \varphi |\nabla u|^2}{(b - \varphi)^3} + \frac{6 |\nabla \varphi|^2 |\nabla u|^2}{(b - \varphi)^4}                                                     \notag   \\
  & = \frac{2|\nabla du|^2 +  <V, \nabla \varphi> - 2 |\nabla u|^4}{(b - \varphi)^2} + \frac{4<\nabla \varphi , \nabla |\nabla u|^2>}{(b - \varphi)^3}                                    \notag  \\
  & + \frac{2 \cos\rho |\nabla u|^4 + 2<V , \nabla \varphi |\nabla u|^2> }{ (b - \varphi)^3} + \frac{6 |\nabla\varphi|^2 |\nabla u|^2}{(b - \varphi)^4}       \label{Phi1}
\end{align}

Because
\begin{equation*}
 < V, \nabla \phi>  =  \frac{<V, \nabla |\nabla u|^2>}{(b-\varphi)^2} + \frac{2|\nabla u|^2 < V , \nabla \varphi>}{(b - \varphi)^3}
\end{equation*}

\begin{equation*}
 \frac{<\nabla \varphi, \nabla \phi>}{b - \varphi}  =  \frac{<\nabla \varphi , \nabla |\nabla u|^2>}{(b-\varphi)^3} + \frac{2|\nabla u|^2 |\nabla \varphi|^2 }{(b - \varphi)^4}
\end{equation*}

\begin{equation*}
  \frac{2|\nabla du|^2}{(b - \varphi)^2} + \frac{2|\nabla \varphi|^2 |\nabla u|^2}{(b - \varphi)^4} \geq \frac{4|\nabla \varphi||\nabla u||\nabla du|}{(b - \varphi)^3}
\end{equation*}

Then (${\ref{Phi1}}$) becomes
\begin{align}
  \Delta \phi (x) & \geq  \frac{2\cos\rho|\nabla u|^4}{(b - \varphi)^3} - \frac{2|\nabla u|^4}{(b - \varphi)^2} + \frac{2<\nabla \varphi , \nabla \phi>}{b - \varphi} +  <V, \nabla \phi>         \notag \\
  & \geq 2\cos\rho(b-\varphi)\phi^2 - 2(b - \varphi)^2 \phi^2 + \frac{2<\nabla \varphi , \nabla \phi>}{b - \varphi} +  <V, \nabla \phi>                                \label{Phi2}
\end{align}

Denote $r^2= |F(x)|^2$ be the distance function in $\mathbf{R}^{n+1}$ from the point $x \in \Sigma$ to the origin. $|\nabla r| \leq |\nabla F| \leq \sqrt{n-1}$. It is easy to check that $\Delta r^2 = 2(n-1) + 2<\overrightarrow{H} , F>$, here and in the sequel, $\nabla$ and $\Delta$ is gradient and Laplacian operator on $\mathbf{\Sigma}$ with induced metric. Note that the mean curvature of $\Sigma$ is bounded, we have

\begin{equation*}
  \Delta r^2 \leq C(1 + r).
\end{equation*}
where $C$ is only depend on $n$ and the upper bound of $|H|$.

Assume the origin $0 \in F(\Sigma)$,we denote $F = (R^2 - r^2)^2 \phi$, then there exists a point $x_0 \in \mathbf{\Sigma }\cap \mathbf{B}^{n+1}_R(0)$, such that $\nabla F(x_0) = 0$, $\Delta F (x_0) \leq 0$. Here $\mathbf{B}^{n+1}_R(0)$ is the ball with $0$ as the center and $R(>r)$ as radius in $\mathbf{R}^{n+1}$.

By a direct computation, we get
\begin{equation*}
  \nabla F = (R^2 - r^2)^2 \nabla \phi - 4r(R^2 - r^2)\phi \nabla r = 0
\end{equation*}

\begin{align*}
  \Delta F & = (R^2 - r^2)^2 \Delta \phi - 8 r(R^2 - r^2)\nabla \phi \nabla r + 8 r^2\phi|\nabla r|^2 - 2(R^2 -r^2)\phi \Delta r^2                        \\
  & \leq 0
\end{align*}
So,
\begin{equation}
  \frac{\nabla \phi}{\phi} = \frac{4r\nabla r}{R^2 -r^2}                          \label{Phi3}
\end{equation}

\begin{equation}
  \frac{\Delta \phi}{\phi} - \frac{2(C+1)r}{R^2 -r^2} - \frac{Cr^2}{(R^2 -r^2)^2} \leq 0     \label{Phi4}
\end{equation}
where $C$ is only depend on $n$ and $H$.

Then from ({\ref{Phi2}}), ({\ref{Phi3}}), and ({\ref{Phi4}}), we get
\begin{align*}
  & 2\cos\rho(b-\varphi)\phi  - 2(b - \varphi)^2 \phi + \frac{2<\nabla \varphi , \frac{\nabla \phi}{\phi}>}{b - \varphi} +  <V, \frac{\nabla \phi}{\phi}>   \\
  & - \frac{4n}{R^2 -r^2} - \frac{24r^2}{(R^2 -r^2)^2}   \\
  & = [2\cos\rho(b-\varphi) - 2(b - \varphi)^2] \phi  + \frac{<4r\nabla r , V>}{R^2 -r^2} +  \frac{8r<\nabla \varphi , \nabla r>}{(b - \varphi)(R^2 - r^2)}  \\
  & -  \frac{2(C+1)r}{R^2 -r^2} - \frac{Cr^2}{(R^2 -r^2)^2}  \\
  & \leq 0
\end{align*}

Because $T$ is a constant vector, and $V$ is its tangent part on $\Sigma$, then the norm of $V$ is bounded. By this the previous inequality becomes
\begin{align*}
  & [2\cos\rho(b-\varphi) - 2(b - \varphi)^2] \phi  - \frac{CR}{R^2 -r^2} -  \frac{8r|\nabla u|}{(b - \varphi)(R^2 - r^2)}  \\
  & -  \frac{2(C+1)r}{R^2 -r^2} - \frac{Cr^2}{(R^2 -r^2)^2}  \\
  & \leq 0
\end{align*}

By the definition of $F$, we have
\begin{align*}
   2(b-\varphi)(\cos\rho - (b - \varphi)) F -8R F^{\frac{1}{2}} - CR^3 - CR^2  \leq 0
\end{align*}

By
\begin{equation*}
  \cos\rho - (b - \varphi) = 1-b >0 , b - \varphi \geq b - \varphi(\Lambda)
\end{equation*}

we get

\begin{align*}
   F -CR F^{\frac{1}{2}} - CR^3 - CR^2  \leq 0
\end{align*}
where $C$ only depend on $n$, the upper bound of $|H|$, the norm of $V$, $b$, $\varphi$, and is independent of $R$.

Then,
\begin{equation}
  \sup\limits_{B_{\frac{R}{2}}^{n+1}(0) \cap \Sigma}F^{\frac{1}{2}}(x) \leq F^{\frac{1}{2}}(x_0) \leq C(R^{\frac{3}{2}} + R)
\end{equation}

So,
\begin{equation}
  \sup\limits_{B_{\frac{R}{2}}^{n+1}(0) \cap \Sigma} \frac{|\nabla u|(x)}{b-\varphi(u(x))} \leq C(\frac{1}{R^{\frac{1}{2}}} + \frac{1}{R}).
\end{equation}

By taking $R$ into infinity, we know that the Gauss map must be constant, so Theorem 1.1 follows.   $\Box$

\end{document}